\newtheorem{theorem}{Theorem}
\newtheorem{lemma}{Lemma}
\theoremstyle{definition}
\newtheorem{prop}{Proposition}
\newtheorem{cor}[theorem]{Corollary}
\theoremstyle{remark}
\begin{document}

\author{E. Liflyand}

\title {Integrability of the Fourier transform: functions of bounded variation}

\subjclass{Primary 42A38; Secondary 42A50}

\keywords{Fourier transform, integrability, Hilbert transform, Hardy
space}

\address{Department of Mathematics, Bar-Ilan University, 52900 Ramat-Gan, Israel}
\email{liflyand@math.biu.ac.il}

\begin{abstract}
Certain relations between the Fourier transform of a function of
bounded variation and the Hilbert transform of its derivative are
revealed. The widest subspaces of the space of functions of bounded
variation are indicated in which the cosine and sine Fourier
transforms are integrable.

\end{abstract}

\maketitle

\section{Introduction}

We are going to compare the Fourier transform of a function of
bounded variation and the Hilbert transform of a related function.
For this, let us start with some known results. The first one is
given in \cite[Thm.2]{L0} (see also \cite{Fr}). We define the
following $T$-transform of a function $g: \mathbb R=[0,\infty)\to
\mathbb C$:

\begin{eqnarray*} Tg(t)=\int_0^{t/2}\frac{g(t+s)-g(t-s)}{s}\,ds,\end{eqnarray*}
where the integral is understood in the improper (principal value)
sense, that is, as $\lim\limits_{\delta\to0+}\int_\delta.$

\begin{theorem}\label{ft} Let $f: \mathbb R_+\to\mathbb C$ be
locally absolutely continuous, of bounded variation and
$\lim\limits_{t\to\infty}f(t)=0.$ Let also $Tf'\in L^1(\mathbb
R_+).$ Then the cosine Fourier transform of $f$

\begin{eqnarray}\label{fc} \widehat{f_c}(x)=\int_0^\infty f(t)\cos
xt\,dt \end{eqnarray} is Lebesgue integrable on $\mathbb R_+$, with

\begin{eqnarray}\label{fce}\|\widehat{f_c}\|_{L^1(\mathbb R_+)}\lesssim
\|f'\|_{L^1(\mathbb R_+)}+\|Tf'\|_{L^1(\mathbb R_+)},\end{eqnarray}
and for the sine Fourier transform, we have, with $x>0,$

\begin{eqnarray}\label{fs} \widehat{f_s}(x)=\int_0^\infty f(t)\sin
xt\,dt=\frac{1}{x}f\left(\frac{\pi}{2x}\right)+F(x),
\end{eqnarray}
where

\begin{eqnarray}\label{fse}\|F\|_{L^1(\mathbb R_+)}\lesssim
\|f'\|_{L^1(\mathbb R_+)}+\|Tf'\|_{L^1(\mathbb R_+)}.\end{eqnarray}
\end{theorem}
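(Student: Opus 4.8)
The plan is to reduce both transforms, via a single integration by parts, to oscillatory integrals of $g:=f'$, and then to split each such integral at the scale $t\sim 1/x$: the low part will be controlled by $\|f'\|_{L^1}$ by an elementary Fubini argument, while the oscillatory tail is where $Tf'$ must enter. Since $f$ is locally absolutely continuous, of bounded variation, and $f(t)\to 0$, the boundary terms vanish except at the origin, giving
\[
\widehat{f_c}(x)=-\frac1x\int_0^\infty g(t)\sin xt\,dt,\qquad \widehat{f_s}(x)=\frac{f(0)}{x}+\frac1x\int_0^\infty g(t)\cos xt\,dt.
\]
For the sine transform I would not keep $f(0)/x$, as it is not integrable near $x=0$; instead I would replace $f(0)$ by $f(\pi/2x)$, using $f(0)-f(\pi/2x)=-\int_0^{\pi/2x}g$, so that
\[
F(x)=\widehat{f_s}(x)-\frac1x f\!\left(\frac{\pi}{2x}\right)=\frac1x\int_0^{\pi/2x} g(t)(\cos xt-1)\,dt+\frac1x\int_{\pi/2x}^\infty g(t)\cos xt\,dt.
\]
Both problems thus reduce to estimating, in $L^1(dx)$, a non-oscillatory remainder together with an oscillatory tail of the form $\frac1x\int_{a/x}^\infty g(t)\sin xt\,dt$ or its cosine analogue.

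The non-oscillatory pieces are bounded by $\|f'\|_{L^1}$. Using $|\sin xt|\le xt$ and $|\cos xt-1|\le xt$ on the relevant ranges together with Fubini,
\[
\int_0^\infty\Big|\tfrac1x\int_0^{1/x}g(t)\sin xt\,dt\Big|\,dx\le\int_0^\infty|g(t)|\,t\!\int_0^{1/t}\!dx\,dt=\|g\|_{L^1},
\]
and the corresponding $\int_0^{\pi/2x}$ term in $F$ is bounded by $\tfrac{\pi}{2}\|g\|_{L^1}$ in exactly the same way.

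The crux is the oscillatory tail $\frac1x\int_{a/x}^\infty g(t)\sin xt\,dt$ (and its cosine analogue), which I expect to be the main obstacle. Here I would exploit the half-period cancellation of $\sin xt$: shifting $t\mapsto t+\pi/x$ turns the tail into $\tfrac12\int[g(t)-g(t+\pi/x)]\sin xt\,dt$ plus an endpoint contribution, so that differences of $g$ over separation $s=\pi/x$ appear. Converting the discrete shift into a continuous average, the weight $1/x$ and the change of variables between $x$ and the separation $s$ are designed to reproduce the kernel $1/s$ and the symmetric differences $g(t+s)-g(t-s)$ that define $Tg$. After an application of Fubini this should identify the tail's $L^1(dx)$ norm, up to a constant and up to terms already dominated by $\|g\|_{L^1}$, with $\|Tg\|_{L^1}=\|Tf'\|_{L^1}$.

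The delicate points I anticipate are the justification of interchanging the integrations against the principal-value definition of $Tf'$, the control of the boundary contributions created by each half-period shift, and, most importantly, keeping the off-diagonal (non-local) part of the resulting Hilbert-type integral inside the $\|g\|_{L^1}$ bound rather than letting it diverge logarithmically, since the naive estimate $\int_0^\infty t^{-1}\,dt$ of the separated region fails. Assembling the three estimates then yields \eqref{fce} and \eqref{fse}, with the term $\frac1x f(\pi/2x)$ left explicit in \eqref{fs} precisely because it is not integrable near the origin.
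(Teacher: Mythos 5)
Your opening reductions are correct and, in fact, coincide with what the paper does wherever it argues by hand: the integration by parts $\widehat{f_c}(x)=-\frac1x\widehat{f'_s}(x)$ is exactly the proof of Theorem \ref{wider}, and your replacement of $f(0)/x$ by $\frac1x f(\pi/2x)$ together with the Fubini bound for $\frac1x\int_0^{\pi/2x}g(t)(\cos xt-1)\,dt$ reproduces the first display and (\ref{sm}) in the proof of Theorem \ref{widers}. The genuine gap is the step you yourself call the crux: the oscillatory tail. A single half-period shift produces differences $g(t)-g(t+\pi/x)$ at the one separation $\pi/x$, whereas $Tg(t)=\int_0^{t/2}\bigl(g(t+s)-g(t-s)\bigr)\frac{ds}{s}$ is an average of symmetric differences over all separations, with the modulus standing \emph{outside} the $s$-integral; that placement of the modulus is the entire point, since the integrability of $Tg$ rests on cancellation in $s$. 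Your proposed ``conversion of the discrete shift into a continuous average plus Fubini'' cannot identify the tail with $\|Tg\|_{L^1}$: after the substitution $s=\pi/x$ the phase $\sin xt=\sin(\pi t/s)$ still couples $t$ and $s$, so the inner integral never becomes $Tg(t)$; and any term-by-term domination after Fubini forces the modulus inside, leading to $\int\!\!\int |g(t+s)-g(t-s)|\,\frac{ds}{s}\,dt$, which is \emph{not} controlled by $\|g\|_{L^1}+\|Tg\|_{L^1}$ (modulated examples $g(t)=\sin(Nt)\phi(t)$ make this quantity grow like $\log N$ while $\|g\|_{L^1}+\|Tg\|_{L^1}$ stays bounded). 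So the logarithmic divergence you flag as a ``delicate point'' is not a technicality to be checked; it is precisely the obstruction, and the mechanism for overcoming it --- which is the substance of the theorem --- is absent from the sketch.

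For comparison, the paper never carries out this real-variable estimate at all: Theorem \ref{ft} is quoted from \cite{L0}, whose proof the paper itself calls ``essentially more complicated,'' and the paper's own route to (\ref{fce}) is soft. It combines $\widehat{f_c}(x)=-\widehat{f'_s}(x)/x$ with the Hardy-type inequality (\ref{Fein}), $\int_{\mathbb R}|\widehat{g}(x)|/|x|\,dx\lesssim\|g\|_{H^1(\mathbb R)}$, and with the decomposition ${\mathcal H}_0g=Tg+\Gamma$, $\|\Gamma\|_{L^1}\lesssim\|g\|_{L^1}$, which converts $\|f'\|_{H_0^1}$ into $\|f'\|_{L^1}+\|Tf'\|_{L^1}$. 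For the sine part, the tail $I(x)=\int_{\pi/2x}^\infty f(t)\sin xt\,dt$ is handled not by half-period shifts but by the explicit Bateman--Erd\'elyi identity $\int_0^\infty\frac{\sin yu}{a^2-u^2}\,du=\frac1a[\sin ay\,\mathrm{Ci}(ay)-\cos ay\,\mathrm{Si}(ay)]$ (Lemma \ref{iht}), which expresses $I$ as ${\mathcal H}_0{\mathcal T}_{f'}$ plus an $L^1$-controlled error, after which Proposition \ref{comph} supplies the bound by $\|f'\|_{H_0^1}$. If you wish to complete a direct proof along your lines, you would need the full apparatus of \cite{L0} or \cite{Fr} (iterated half-period summation with bookkeeping of all shifts, or an atomic decomposition of the Hardy space), not a single shift.
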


Here and in what follows we use the notation ``$\, \lesssim \, $''
and ``$\, \gtrsim \, $'' as abbreviations for ``$\, \le C \, $'' and
``$\, \ge C \, $'', with $C$ being an absolute positive constant.

Let us now turn to the Hilbert transform of an integrable function
$g$

\begin{eqnarray}\label{dht}\mathcal{H}g(x)=\frac{1}{\pi}\int_\mathbb{R}\frac{g(t)}{t-x}\,dt,\end{eqnarray}
where the integral is also understood in the improper (principal
value) sense, now as $\lim\limits_{\delta\to0+}\int_{|t-x|>\delta}.$
It is not necessarily integrable, and when it is, we say that $g$ is
in the (real) Hardy space $H^1(\mathbb R).$ If $g\in H^1(\mathbb
R)$, then

\begin{eqnarray}\label{vm}\int_{\mathbb R}  g(t)\,dt=0.\end{eqnarray}
It was apparently first mentioned in \cite{kober}.

An odd function always satisfies (\ref{vm}). However, not every odd
integrable function belongs to $H^1(\mathbb R)$, for a
counterexample see, e.g., \cite{LiTi0}. When in the definition of
the Hilbert transform (\ref{dht}) the function $g$ is odd, we will
denote this transform by $\mathcal{H}_0,$ and it is equal to

\begin{eqnarray}\label{dht0}\mathcal{H}_0g(x)=\frac{2}{\pi}\int_0^\infty\frac{tg(t)}{t^2-x^2}\,dt.\end{eqnarray}
If it is integrable, we will denote the corresponding Hardy space by
$H_0^1(\mathbb R)$.

Since

\begin{eqnarray*} {\mathcal{H}_0}g(x)=Tg(x)+\Gamma(x),\end{eqnarray*}
where $\Gamma$ is such that

\begin{eqnarray*}\int_0^\infty |\Gamma(x)|\,dx\lesssim\int_0^\infty|g(t)|\,dt,\end{eqnarray*}
the right-hand sides of (\ref{fce}) and (\ref{fse}) can be treated
as $\|f'\|_{H_0^1(\mathbb R_+)}.$ This has been observed in
\cite{L0} and later on in \cite{Fr}.

The space of integrable functions $g$ with integrable $Tg$, or just
$H_0^1(\mathbb R_+),$ is one of the widest spaces the belonging of
the derivative $f'$ to which ensures the integrability of the cosine
Fourier transform of $f.$ However, the possibility of existence (or
non-existence) of a wider space of such type is of considerable
interest. Let us show that such a space does exist, moreover, it is
the widest possible, at least provides a necessary and sufficient
condition for the integrability of the cosine Fourier transform. In
fact, it has in essence been introduced (for different purposes) in
\cite{JW} as

\begin{eqnarray}\label{spQ}Q=\{g: g\in L^1(\mathbb R), \int_{\mathbb
R}\frac{|\widehat{g}(x)|}{|x|}\,dx<\infty\}.       \end{eqnarray}
With the obvious norm

\begin{eqnarray*} \|g\|_{L^1(\mathbb R)}+\int_{\mathbb
R}\frac{|\widehat{g}(x)|}{|x|}\,dx                 \end{eqnarray*}
it is a Banach space and ideal in $L^1(\mathbb R).$  What we will
actually use is the space $Q_0$ of the odd functions from $Q$

\begin{eqnarray}\label{spQ0}Q_0=\{g: g\in L^1(\mathbb R), g(-t)=-g(t),
\int_0^\infty\frac{|\widehat{g_s}(x)|}{x}\,dx<\infty\};\end{eqnarray}
such functions naturally satisfy (\ref{vm}).

\begin{theorem}\label{wider} Let $f: \mathbb R_+\to\mathbb C$ be
locally absolutely continuous, of bounded variation and
$\lim\limits_{t\to\infty}f(t)=0.$ Then the cosine Fourier transform
of $f$ given by (\ref{fc}) is Lebesgue integrable on $\mathbb R_+$
if and only if $f'\in Q_0$.
\end{theorem}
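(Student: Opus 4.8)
The plan is to reduce the entire statement to a single integration-by-parts identity linking $\widehat{f_c}$ to the sine Fourier transform of $f'$. First I would observe that the hypotheses — $f$ locally absolutely continuous, of bounded variation, with $\lim_{t\to\infty}f(t)=0$ — already force $f'\in L^1(\mathbb R_+)$, since the total variation of such an $f$ equals $\int_0^\infty|f'(t)|\,dt$. Extending $f'$ oddly to $\mathbb R$, the membership test ``$f'\in Q_0$'' then reduces, by definition (\ref{spQ0}), to the single integral condition $\int_0^\infty \frac{|\widehat{f'_s}(x)|}{x}\,dx<\infty$, where $\widehat{f'_s}(x)=\int_0^\infty f'(t)\sin xt\,dt$ is an absolutely convergent integral for every $x$ because $|f'(t)\sin xt|\le|f'(t)|$.

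The key step is to establish the pointwise identity
\[ \widehat{f_c}(x)=-\tfrac1x\,\widehat{f'_s}(x)\qquad(x>0). \]
I would obtain it by integrating $\int_0^A f(t)\cos xt\,dt$ by parts with $v=\frac{\sin xt}{x}$: the boundary term at $0$ vanishes since $\sin 0=0$, while the term $f(A)\frac{\sin xA}{x}$ tends to $0$ as $A\to\infty$ for each fixed $x>0$, because $f(A)\to0$ and $|\sin xA/x|\le 1/x$. This computation simultaneously shows that the improper integral defining $\widehat{f_c}(x)$ converges and identifies its value as $-\frac1x\widehat{f'_s}(x)$.

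With this identity in hand the conclusion is immediate: taking absolute values and integrating over $\mathbb R_+$ gives $\int_0^\infty|\widehat{f_c}(x)|\,dx=\int_0^\infty\frac{|\widehat{f'_s}(x)|}{x}\,dx$, so the two quantities are finite together. Hence $\widehat{f_c}\in L^1(\mathbb R_+)$ precisely when $\int_0^\infty\frac{|\widehat{f'_s}(x)|}{x}\,dx<\infty$, that is, precisely when $f'\in Q_0$, and both directions of the equivalence follow at once. Because the integrands literally coincide, the often delicate behaviour near $x=0$ and near $x=\infty$ requires no separate treatment.

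I expect the only genuine technical point to be the rigorous justification of the integration by parts and of the improper (as opposed to absolute) convergence of $\widehat{f_c}$ — in particular, confirming that the boundary contribution at infinity truly vanishes and that no principal-value subtlety is concealed there. Everything else is bookkeeping once the identity $\widehat{f_c}(x)=-x^{-1}\widehat{f'_s}(x)$ is secured.
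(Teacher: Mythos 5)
Your proposal is correct and follows essentially the same route as the paper: the paper's entire proof consists of the integration by parts yielding $\widehat{f_c}(x)=-\frac{1}{x}\widehat{f'_s}(x)$ followed by integration of both sides over $\mathbb R_+$. Your version merely supplies the details the paper leaves implicit (that $f'\in L^1(\mathbb R_+)$ follows from bounded variation, and that the boundary term at infinity vanishes), which is a faithful elaboration rather than a different argument.
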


The situation is more delicate with the sine Fourier transform,
where a sort of asymptotic relation can be obtained. In what follows
we shall denote

\begin{eqnarray}\label{tq}\mathcal{T}_g(x)=\frac{\widehat{g_s}(x)}{x}.
\end{eqnarray}

\begin{theorem}\label{widers} Let $f: \mathbb R_+\to\mathbb C$ be
locally absolutely continuous, of bounded variation and
$\lim\limits_{t\to\infty}f(t)=0.$ Then for the sine Fourier
transform of $f$ given in (\ref{fs}) there holds for any $x>0$

\begin{eqnarray}\label{fsg} \widehat{f_s}(x)=\int_0^\infty f(t)\sin
xt\,dt=\frac{1}{x}f\left(\frac{\pi}{2x}\right)+ {\mathcal
H}_0{\mathcal T}_{f'}(x)+G(x),
\end{eqnarray}
where

\begin{eqnarray}\label{fseg}\|G\|_{L^1(\mathbb R_+)}\lesssim
\|f'\|_{L^1(\mathbb R_+)}.\end{eqnarray}
\end{theorem}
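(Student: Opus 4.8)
The plan is to start from the exact representation of the sine transform already isolated in Theorem~\ref{ft}, namely equation~(\ref{fs}), and to analyze the error term $F(x)$ there more finely. The point of Theorem~\ref{ft} is that $F$ is dominated in $L^1$ by $\|f'\|_{L^1}+\|Tf'\|_{L^1}$; here we want to extract from $F$ the single genuinely ``non-$L^1$-controllable by $\|f'\|_{L^1}$ alone'' piece and identify it explicitly as $\mathcal H_0\mathcal T_{f'}(x)$, pushing everything else into a remainder $G$ with $\|G\|_{L^1}\lesssim\|f'\|_{L^1}$. So the target identity~(\ref{fsg}) is really a refinement of~(\ref{fs}): the term $\frac1x f(\pi/(2x))$ is carried over verbatim, and the task is to show $F(x)=\mathcal H_0\mathcal T_{f'}(x)+G(x)$.

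The concrete computation I would carry out is integration by parts on the sine transform. Writing $\widehat{f_s}(x)=\int_0^\infty f(t)\sin xt\,dt$ and integrating by parts (using $f(t)\to0$ and local absolute continuity so $f(t)=-\int_t^\infty f'(u)\,du$), one converts the sine transform of $f$ into an expression built from $f'$. The natural object that emerges is $\int_0^\infty f'(t)\,\frac{1-\cos xt}{x}\,dt$ or a close variant; the ``$\frac1x$'' produces the factor defining $\mathcal T_{f'}(x)=\widehat{(f')_s}(x)/x$. I then want to recognize that the kernel $\frac{t}{t^2-x^2}$ appearing in $\mathcal H_0$ (see~(\ref{dht0})) arises when one takes the Hilbert transform of $\mathcal T_{f'}$ against the variable $x$, and to match this against the oscillatory integral coming from the sine transform. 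The key algebraic identity will relate $\sin xt$, integrated against $f'$, to a principal-value integral in $x$ of $\widehat{(f')_s}/x$; this is exactly the mechanism by which a conjugate-function (Hilbert transform) term appears.

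The step I expect to be the main obstacle is controlling the \emph{difference} between the true error $F$ in~(\ref{fs}) and the explicit Hilbert-transform term, i.e. proving $\|F-\mathcal H_0\mathcal T_{f'}\|_{L^1(\mathbb R_+)}\lesssim\|f'\|_{L^1}$ with only $\|f'\|_{L^1}$ on the right and \emph{no} $\|Tf'\|$ term. Since Theorem~\ref{ft} already gives the weaker bound $\|F\|_{L^1}\lesssim\|f'\|_{L^1}+\|Tf'\|_{L^1}$, and since the excerpt records that $\mathcal H_0 g=Tg+\Gamma$ with $\|\Gamma\|_{L^1}\lesssim\|g\|_{L^1}$, the $T$-transform contribution is precisely what $\mathcal H_0\mathcal T_{f'}$ is designed to absorb. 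Thus the real work is to show that after subtracting the clean $\mathcal H_0$ term, the leftover kernel is an honest $L^1(\mathbb R_+)$-bounded averaging operator applied to $f'$, uniformly in $x$, with no residual logarithmic or principal-value singularity. I would handle this by splitting the $t$-integration at $t\sim 1/x$ (the scale distinguished by the boundary term $\frac1x f(\pi/(2x))$), estimating the near-diagonal part by the cancellation built into $Tf'$ versus $\mathcal H_0$, and estimating the far part by direct kernel bounds together with Fubini to integrate in $x$ first. Verifying that these pieces collapse to the single main term plus a remainder governed solely by $\|f'\|_{L^1}$ is where the delicacy lies.
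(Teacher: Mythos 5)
Your overall skeleton agrees with the paper's: peel off the boundary term $\frac{1}{x}f\left(\frac{\pi}{2x}\right)$, identify $\mathcal H_0\mathcal T_{f'}$ inside what remains, and push everything else into $G$ with $\|G\|_{L^1}\lesssim\|f'\|_{L^1}$, splitting at the scale $t\sim 1/x$ and integrating errors out by Fubini. But the proposal stops exactly where the actual proof has to begin: it never supplies the identity that makes the ``matching'' of $\mathcal H_0\mathcal T_{f'}$ against the oscillatory integral possible. The paper's proof rests entirely on Lemma \ref{iht}: one writes $\mathcal H_0\mathcal T_{f'}(x)=\frac{2}{\pi}\int_0^\infty \widehat{f'_s}(u)\,\frac{1}{u^2-x^2}\,du$, interchanges the order of integration, and evaluates the inner principal-value integral $\int_0^\infty \frac{\sin ut}{u^2-x^2}\,du$ in closed form via Bateman's formula, in terms of $\mathrm{Si}$ and $\mathrm{Ci}$. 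That closed form is what produces the two explicit kernels $\int_{xt}^\infty\frac{\cos v}{v}\,dv$ and $\int_0^{xt}\frac{\sin v}{v}\,dv$; the constant $\frac{\pi}{2}$ coming from $\mathrm{Si}$ reproduces (after integration by parts) precisely the tail $I(x)=\int_{\pi/(2x)}^\infty f(t)\sin xt\,dt$, while every other piece is of size $O\left(\frac{1}{xt}\right)$ or $O\left(\ln\frac{1}{xt}\right)$ and is absorbed by Fubini against $|f'|$. Your ``key algebraic identity will relate $\sin xt$ \dots to a principal-value integral'' is a placeholder for this lemma, not a proof of it; without it the argument has no content.

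Two further points would derail the plan as written. First, integration by parts on the sine transform does not produce $\mathcal T_{f'}$: an antiderivative of $\sin xt$ yields $\frac{1}{x}\widehat{f'_c}(x)$, a \emph{cosine} transform of $f'$, whereas $\mathcal T_{f'}(x)=\widehat{f'_s}(x)/x$ is what integration by parts produces from the \emph{cosine} transform of $f$ (that is exactly the proof of Theorem \ref{wider}). So the ``natural object that emerges'' is misidentified, and bridging $\widehat{f'_c}(x)/x$ with $\mathcal H_0\mathcal T_{f'}(x)$ is precisely the nontrivial step: a conjugacy between sine and cosine transforms distorted by the extra factor $1/u$, which is what Lemma \ref{iht} encodes. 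Second, your plan to control the near-diagonal part by ``the cancellation built into $Tf'$ versus $\mathcal H_0$'' cannot work: Theorem \ref{widers} assumes nothing about $Tf'$ --- removing that hypothesis is the whole point --- so no step of the proof may invoke $\|Tf'\|_{L^1}$; moreover, applying $\mathcal H_0 g = Tg+\Gamma$ with $g=\mathcal T_{f'}$ would require $\mathcal T_{f'}\in L^1$, i.e. $f'\in Q_0$, which is also not assumed here. In the paper the near-diagonal contributions are handled by crude size estimates ($\left|\frac{\sin xt}{x}\right|\le t$, and $\int_{xt}^1\left|\frac{\cos v}{v}\right|dv=O\left(\ln\frac{1}{xt}\right)$ with $\int_0^{1/t}\ln\frac{1}{xt}\,dx=\frac{1}{t}$), with no cancellation argument at all.
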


This theorem makes it natural to consider a Hardy type space
$H^1_Q(\mathbb R_+)$ which consists of $Q_0$ functions $g$ with
integrable ${\mathcal H}_0{\mathcal T}_g.$

\begin{cor} Let a function $f$ satisfy the assumptions of Theorem
\ref{widers} and such that $f'\in H^1_Q(\mathbb R_+).$ Then

\begin{eqnarray}\label{fsgc} \widehat{f_s}(x)=\frac{1}{x}f\left(\frac{\pi}{2x}\right)+G(x),
\end{eqnarray}
where

\begin{eqnarray}\label{fseg}\|G\|_{L^1(\mathbb R_+)}\lesssim
\|f'\|_{H_Q^1(\mathbb R_+)}.\end{eqnarray}
\end{cor}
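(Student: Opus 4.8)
The plan is to obtain the corollary as an immediate specialization of Theorem \ref{widers}: once $f'$ is assumed to lie in $H^1_Q(\mathbb R_+)$, the Hilbert transform term appearing in (\ref{fsg}) becomes an $L^1$ function and can simply be absorbed into the remainder. So the argument is essentially bookkeeping on top of the already-proved theorem.

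First I would apply Theorem \ref{widers} to $f$, which is legitimate since $f$ satisfies exactly the hypotheses of that theorem. For every $x>0$ this yields the decomposition
\[
\widehat{f_s}(x)=\frac{1}{x}f\left(\frac{\pi}{2x}\right)+{\mathcal H}_0{\mathcal T}_{f'}(x)+G_0(x),
\]
where I write $G_0$ for the remainder supplied by Theorem \ref{widers}, reserving the symbol $G$ for the remainder in the statement of the corollary. The bound (\ref{fseg}) of Theorem \ref{widers} gives $\|G_0\|_{L^1(\mathbb R_+)}\lesssim\|f'\|_{L^1(\mathbb R_+)}$.

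Next I would invoke the defining property of $H^1_Q(\mathbb R_+)$. Membership $f'\in H^1_Q(\mathbb R_+)$ means precisely that $f'$ is a $Q_0$ function for which ${\mathcal H}_0{\mathcal T}_{f'}$ is integrable on $\mathbb R_+$. Consequently the middle term above is itself in $L^1(\mathbb R_+)$, and I may set
\[
G(x)={\mathcal H}_0{\mathcal T}_{f'}(x)+G_0(x),
\]
which turns the displayed identity into (\ref{fsgc}) verbatim.

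Finally, to establish the norm bound I would estimate by the triangle inequality,
\[
\|G\|_{L^1(\mathbb R_+)}\le\|{\mathcal H}_0{\mathcal T}_{f'}\|_{L^1(\mathbb R_+)}+\|G_0\|_{L^1(\mathbb R_+)}\lesssim\|{\mathcal H}_0{\mathcal T}_{f'}\|_{L^1(\mathbb R_+)}+\|f'\|_{L^1(\mathbb R_+)},
\]
and then observe that both terms on the right are dominated by the $H^1_Q$-norm of $f'$. Indeed, taking for $\|\cdot\|_{H^1_Q(\mathbb R_+)}$ the natural graph norm $\|g\|_{L^1(\mathbb R)}+\int_0^\infty|\widehat{g_s}(x)|/x\,dx+\|{\mathcal H}_0{\mathcal T}_g\|_{L^1(\mathbb R_+)}$, the $Q_0$-part controls $\|f'\|_{L^1}$ while the last summand is exactly $\|{\mathcal H}_0{\mathcal T}_{f'}\|_{L^1}$. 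This yields $\|G\|_{L^1(\mathbb R_+)}\lesssim\|f'\|_{H^1_Q(\mathbb R_+)}$, as required. There is no substantial obstacle; the only point needing care is that the norm on $H^1_Q(\mathbb R_+)$ be the graph norm above, so that both pieces appearing in the triangle-inequality estimate are genuinely subordinate to it.
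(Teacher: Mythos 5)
Your proposal is correct and is exactly the argument the paper has in mind: the paper itself offers no written proof beyond the remark that ``technically, this is an obvious corollary of Theorem \ref{widers},'' and your bookkeeping (absorbing ${\mathcal H}_0{\mathcal T}_{f'}$ into the remainder and bounding via the triangle inequality) is precisely that obvious specialization. Your explicit observation that the bound requires taking the graph norm on $H^1_Q(\mathbb R_+)$ is a worthwhile clarification, since the paper never spells out which norm it puts on that space.
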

Technically, this is an obvious corollary of Theorem \ref{widers}.
We shall discuss it in Section 3.

\bigskip

\section{Proofs}

\begin{proof}[Proof of Theorem \ref{wider}] The assumptions of the theorem give a possibility to
integrate by parts. This yields

\begin{eqnarray*} \widehat{f_c}(x)=-\frac{1}{x}\int_0^\infty
f'(t)\sin xt\,dt=-\frac{1}{x}\widehat{f'_s}(x).\end{eqnarray*}
Integrating both sides over $\mathbb R_+$ completes the proof.
 \hfill\end{proof}

\begin{proof}[Proof of Theorem \ref{widers}] Let us start with integration by parts in

\begin{eqnarray*}\int_0^{\frac{\pi}{2x}} f(t)\sin xt\,dt&=&
\frac{1-\cos xt}{x}f(t)\mid_0^{\frac{\pi}{2x}}+
\frac{1}{x}\int_0^{\frac{\pi}{2x}} f'(t)[\cos xt-1]\,dt\\
&=&\frac{1}{x}f\left(\frac{\pi}{2x}\right)+
\frac{1}{x}\int_0^{\frac{\pi}{2x}} f'(t)[\cos xt-1]\,dt.
\end{eqnarray*}
The last value is bounded by $\int_0^{\frac{\pi}{2x}} t|f'(t)|\,dt$,
and

\begin{eqnarray}\label{sm}\int_0^\infty\,\int_0^{\frac{\pi}{2x}}
t|f'(t)|\,dt\,dx=\frac{\pi}{2}\int_0^\infty|f'(t)|\,dt.\end{eqnarray}

Let us now consider

\begin{eqnarray}\label{mainint}I=I(x)=\int_{\frac{\pi}{2x}}^\infty
f(t)\sin xt\,dt.                               \end{eqnarray}

We will start with the following statement.

\begin{lemma}\label{iht} There holds

\begin{eqnarray*}{\mathcal H}_0{\mathcal T}_{f'}(x)=\frac{2}{x\pi}
\int_0^\infty f'(t)\sin xt\,\int_{xt}^\infty\frac{\cos
v}{v}\,dv\,dt+\frac{2}{x\pi} \int_0^\infty f'(t)\cos
xt\,\int_0^{xt}\frac{\sin v}{v}\,dv\,dt.\end{eqnarray*}
\end{lemma}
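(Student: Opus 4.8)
The plan is to unfold the two definitions and reduce everything to a single explicitly computable kernel integral. First I would substitute (\ref{tq}) into the representation (\ref{dht0}) of ${\mathcal H}_0$ applied to ${\mathcal T}_{f'}$, that is,
\[
{\mathcal H}_0{\mathcal T}_{f'}(x)=\frac{2}{\pi}\int_0^\infty\frac{s\,{\mathcal T}_{f'}(s)}{s^2-x^2}\,ds=\frac{2}{\pi}\int_0^\infty\frac{\widehat{f'_s}(s)}{s^2-x^2}\,ds,
\]
the point being that the factor $s$ in the numerator of (\ref{dht0}) cancels the $s$ in the denominator of (\ref{tq}), leaving the simpler kernel $1/(s^2-x^2)$. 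Then I would insert $\widehat{f'_s}(s)=\int_0^\infty f'(t)\sin st\,dt$ and interchange the order of integration, so that the $s$-integral becomes an inner kernel integral $\int_0^\infty \frac{\sin st}{s^2-x^2}\,ds$ acting on each fixed $t$.

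Next I would evaluate this inner integral in closed form. Writing $\frac{1}{s^2-x^2}=\frac{1}{2x}\bigl(\frac{1}{s-x}-\frac{1}{s+x}\bigr)$ and rescaling by $v=st$ reduces it to the two principal-value integrals $\int_0^\infty\frac{\sin v}{v-xt}\,dv$ and $\int_0^\infty\frac{\sin v}{v+xt}\,dv$. Shifting the variable in each and using the standard evaluation $\int_0^\infty\frac{\sin v}{v}\,dv=\frac{\pi}{2}$ together with the partial sine and cosine integrals $\int_0^{xt}\frac{\sin v}{v}\,dv$ and $\int_{xt}^\infty\frac{\cos v}{v}\,dv$ expresses both in closed form. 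Taking their difference, the constant $\pi/2$ contributions cancel while the sine-integral and cosine-integral terms reinforce; after multiplying by $\frac{1}{2x}$ this leaves precisely
\[
\int_0^\infty\frac{\sin st}{s^2-x^2}\,ds=\frac{1}{x}\Bigl[\cos xt\int_0^{xt}\frac{\sin v}{v}\,dv+\sin xt\int_{xt}^\infty\frac{\cos v}{v}\,dv\Bigr].
\]
Substituting this back under the outer $t$-integral and separating the cosine and sine terms yields exactly the two summands in the statement, the prefactor $\frac{2}{x\pi}$ emerging from the $\frac{2}{\pi}$ above combined with the $1/x$.

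The main obstacle is the justification of the interchange of the order of integration. Neither iterated integral converges absolutely: the kernel $1/(s^2-x^2)$ is only principal-value integrable across $s=x$, and the $s$-integral decays too slowly for Fubini to apply directly. I would therefore carry out the computation on the truncated, symmetrically regularized integrals over $\{|s-x|>\delta,\ s<N\}$, where Fubini is legitimate, evaluate the inner integral there, and then pass to the limits $\delta\to0+$ and $N\to\infty$; the uniform boundedness of the partial sine and cosine integrals in their parameters is what allows these limits to be taken under the $t$-integral. Care with the principal value at $s=x$ is likewise needed in the closed-form evaluation, since it is precisely the odd part of the kernel near $s=x$ that survives.
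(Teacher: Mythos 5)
Your proposal is correct and follows essentially the same route as the paper: unfold the definitions so that the factor of $s$ cancels, interchange the order of integration, and evaluate the inner kernel integral $\int_0^\infty \frac{\sin st}{s^2-x^2}\,ds$ in closed form in terms of the partial sine and cosine integrals, which is exactly the paper's chain of steps and yields the same formula. The only differences are minor: the paper cites this closed form from Bateman--Erd\'elyi's tables (expressed via $\mathrm{Si}$ and $\mathrm{Ci}$) rather than deriving it by partial fractions as you do, and it silently performs the interchange of integration whose justification you carefully supply.
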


{\bf Proof of Lemma \ref{iht}.} We have

\begin{eqnarray}\label{sfs}{\mathcal H}_0{\mathcal T}_{f'}(x)&=&\frac{2}{\pi}
\int_0^\infty \widehat{f'_s}(u)\,\frac{1}{u^2-x^2}\,du\\
&=&\frac{2}{\pi}\int_0^\infty f'(t)\,\int_0^\infty\frac{1}{u^2-x^2}
\sin ut\,du\,dt.                                   \end{eqnarray}
Denoting, as usual,

\begin{eqnarray*} {\rm Ci}(u)=-\int_u^\infty\frac{\cos
t}{t}\,dt\end{eqnarray*}                 and

\begin{eqnarray*} {\rm Si}(u)=\int_0^u\frac{\sin t}{t}\,dt =
\frac{\pi}{2}-\int_u^\infty\frac{\sin t}{t}\,dt,   \end{eqnarray*}
we will make use of the formula (see \cite[Ch.II, (18)]{BE})

\begin{eqnarray*}\int_0^\infty \frac{1}{a^2-x^2}\,\sin yx\,dx=
\frac1a[\sin ay\,{\rm Ci}(ay)-\cos ay\,{\rm Si(ay)}],\end{eqnarray*}
where the integrals is understood in the principal value sense and
$a,y>0.$ We apply this formula to the inner integral on the
right-hand side of (\ref{sfs}), with $a=x$ and $y=t.$ Using the
expressions for ${\rm Ci}$ and ${\rm Si}$, we complete the proof of
the lemma. \hfill$\Box$

With this in hand, we are going to prove that

\begin{eqnarray}\label{ihtf}I(x)={\mathcal H}_0{\mathcal T}_{f'}(x)+G(x),\end{eqnarray}
where

\begin{eqnarray}\label{fseg1}\|G\|_{L^1(\mathbb R_+)}\lesssim
\|f'\|_{L^1(\mathbb R_+)}.                          \end{eqnarray}
We denote the two summands in the expression obtained in the lemma
by $I_1$ and $I_2.$ For both, we make use of the fact that

\begin{eqnarray*}\int_{xt}^\infty\frac{\cos v}{v}\,dv=O(\frac{1}{xt}).\end{eqnarray*}
The same true when $\cos v$ is replaced by $\sin v.$ When
$t\ge\frac{1}{x}$ we have

\begin{eqnarray}\label{big}\int_0^\infty\frac{1}{x}\int_{\frac{1}{x}}^\infty|f'(t)|\frac{1}{xt}\,dt\,dx =
\int_0^\infty|f'(t)|t\,\int_{\frac{1}{t}}^\infty\frac{1}{x^2}\,dx=
\int_0^\infty |f'(t)|\,dt.\end{eqnarray}

When $t\le\frac{1}{x}$ we split the inner integral into two. First,

\begin{eqnarray*}\int_1^\infty\frac{\cos v}{v}\,dv=O(1),\end{eqnarray*}
and using $\biggl|\frac{\sin xt}{x}\biggr|\le t,$ we arrive at the
estimate similar to (\ref{sm}). Further, we have

\begin{eqnarray*}\int_{xt}^1\biggl|\frac{\cos v}{v}\biggr|\,dv=O(\ln\frac{1}{xt}).\end{eqnarray*}
By this, integrating in $x$ over $(0,\infty),$ we have to estimate

\begin{eqnarray*}\int_0^\infty|f'(t)|t\,\int_0^{1/t}\ln\frac{1}{xt}\,dx\,dt
=\int_0^\infty|f'(t)|\,dt.\end{eqnarray*} Here we use that

\begin{eqnarray*}\int_0^{1/t}\ln\frac{1}{xt}\,dx=\frac{1}{t}.\end{eqnarray*}
In conclusion, $I_1$ can be treated as $G$.

Let us proceed to $I_2.$ Using that

\begin{eqnarray*}\frac{1}{x}\biggl|\int_0^{xt}\frac{\sin v}{v}\,dv\biggr|=O(t),\end{eqnarray*}
we arrive for $t\le\frac{\pi}{2x}$ at (\ref{sm}). Let now
$t\le\frac{\pi}{2x}$. We have

\begin{eqnarray*}\int_0^{xt}\frac{\sin v}{v}\,dv=\frac{\pi}{2}-
\int_{xt}^\infty\frac{\sin v}{v}\,dv.\end{eqnarray*}  Now

\begin{eqnarray*}\frac{2}{x\pi} \int_0^\infty f'(t)\cos
xt\,dt\frac{\pi}{2}=I.\end{eqnarray*} For the integral
$\int_{xt}^\infty\frac{\sin v}{v}\,dv$, the estimates are exactly
like those in (\ref{big}).

Combining (\ref{ihtf}) and estimates before Lemma \ref{iht}, we
complete the proof of the theorem.  \hfill\end{proof}

\bigskip

\section{Discussion}

Discussion and comments are in order. At first sight, Theorem
\ref{wider} does not seem to be a result at all, at most a technical
reformulation of (\ref{fc}). This could be so but not after the
appearance of the analysis of $Q$ in \cite{JW}. Indeed, the
well-known extension of Hardy's inequality (see, e.g.,
\cite[(7.24)]{GR})

\begin{eqnarray}\label{Fein}\int_{\mathbb
R}\frac{|\widehat{g}(x)|}{|x|}\,dx\lesssim\|g\|_{H^1(\mathbb R)}
\end{eqnarray}
implies

\begin{eqnarray}\label{em1}H^1(\mathbb R)\subseteq Q\subseteq L^1_0(\mathbb R),\end{eqnarray}
where the latter is the subspace of $g$ in $L^1(\mathbb R)$ which
satisfy the cancelation property (\ref{vm}).

It is worth noting that (\ref{em1}) immediately proves (\ref{fce})
from Theorem \ref{ft}. The initial proof in \cite{L0} is essentially
more complicated.

It is doubtful that $Q$ (or $Q_0$) may be defined in terms of $f$
itself rather than its Fourier transform, therefore it is of
interest to find certain proper subspaces of $Q_0$ wider than $H^1$
belonging to which is easily verifiable. We mention the paper
\cite{sweezy} in which a family of subspaces between $H^1$ and $L^1$
is introduced and duality properties of that family are studied.
However, it is not clear how to compare that family with $Q_0.$

Back to Theorem \ref{widers}, let us analyze (\ref{ihtf}). On the
one hand, we have

\begin{eqnarray*} \int_0^\infty |I(x)|\,dx=\int_0^\infty |{\mathcal
H}_0{\mathcal T}_{f'}(x)|\,dx+O(\|f'\|_{H^1_0(\mathbb R_+)}).
\end{eqnarray*}
On the other hand, it is proved in \cite{L0} that

\begin{eqnarray*} \int_0^\infty |I(x)|\,dx=O(\|f'\|_{H^1_0(\mathbb R_+)}).
\end{eqnarray*}
This leads to

\begin{prop}\label{comph} If $g$ is an integrable odd function, then

\begin{eqnarray*} \|{\mathcal H}_0{\mathcal T}_g\|_{L^1(\mathbb R_+)}
\lesssim \|g\|_{H_0^1(\mathbb R_+)}.                 \end{eqnarray*}
\end{prop}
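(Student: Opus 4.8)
The plan is to derive Proposition \ref{comph} directly from the two displayed identities that precede it in the Discussion, rather than to prove it from scratch. The first identity, which is exactly relation (\ref{ihtf}) from Theorem \ref{widers} integrated in absolute value, gives
\begin{eqnarray*}
\int_0^\infty |I(x)|\,dx=\int_0^\infty |{\mathcal H}_0{\mathcal T}_{f'}(x)|\,dx+O(\|f'\|_{H^1_0(\mathbb R_+)}),
\end{eqnarray*}
since $\|G\|_{L^1(\mathbb R_+)}\lesssim\|f'\|_{L^1(\mathbb R_+)}\le\|f'\|_{H^1_0(\mathbb R_+)}$. The second identity, cited from \cite{L0}, asserts $\int_0^\infty|I(x)|\,dx=O(\|f'\|_{H^1_0(\mathbb R_+)})$. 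Subtracting these two and moving the $O$-terms to the right-hand side immediately yields $\int_0^\infty|{\mathcal H}_0{\mathcal T}_{f'}(x)|\,dx\lesssim\|f'\|_{H^1_0(\mathbb R_+)}$, which is the desired bound with $g$ replaced by $f'$.

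The remaining point is purely one of hypotheses: the two identities are stated for $f'$ where $f$ satisfies the running assumptions of Theorem \ref{widers} (locally absolutely continuous, of bounded variation, vanishing at infinity), whereas the proposition is phrased for an arbitrary integrable odd function $g$. So the key step is to realize any such $g$ as the derivative $f'$ of an admissible $f$. The natural choice is
\begin{eqnarray*}
f(t)=-\int_t^\infty g(s)\,ds,
\end{eqnarray*}
so that $f'=g$, $f$ is locally absolutely continuous, and $\lim_{t\to\infty}f(t)=0$ because $g\in L^1$. One must also check that $f$ has bounded variation on $\mathbb R_+$, which follows from $\int_0^\infty|f'(t)|\,dt=\|g\|_{L^1(\mathbb R_+)}<\infty$. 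With this $f$, both cited identities apply verbatim with $f'=g$, and the subtraction above gives the claim.

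The only genuine obstacle I anticipate is making sure the norm $\|g\|_{H^1_0(\mathbb R_+)}$ appearing on the right is finite and that the reconstruction $f$ does not require extra regularity; since the statement already assumes $g$ is an integrable odd function and the inequality is vacuous (or trivially interpreted) when the right-hand side is infinite, no difficulty arises there. Everything else is bookkeeping: verifying that the error terms in both identities are genuinely controlled by $\|f'\|_{H^1_0(\mathbb R_+)}$ (the first by the explicit estimate in Theorem \ref{widers} together with $\|\cdot\|_{L^1}\le\|\cdot\|_{H^1_0}$, the second by citation), after which the subtraction is a one-line triangle-inequality argument. Thus the proof is essentially a transcription of the two preceding displays plus the identification $g=f'$.
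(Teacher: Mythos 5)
Your proposal is correct and is essentially identical to the paper's own proof: the paper derives Proposition \ref{comph} by exactly this subtraction of the two displayed identities (the one from Theorem \ref{widers} and the one cited from \cite{L0}), an argument the author himself calls ``artificial.'' Your added step of reconstructing $f(t)=-\int_t^\infty g(s)\,ds$ so that $g=f'$ is left implicit in the paper, and making it explicit is a small but sound clarification rather than a different route.
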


The above proof of Proposition \ref{comph} looks "artificial". A
direct proof, preferable simple enough will be very desirable. In
any case, this implies an updated chain of embeddings

\begin{eqnarray}\label{em2}H_0^1(\mathbb R_+)\subseteq H^1_Q(\mathbb R_+)\subseteq
Q_0\subseteq L^1_0(\mathbb R_+).                     \end{eqnarray}
It is very interesting to figure out which of these embeddings are
proper. Correspondingly, intermediate spaces are of interest, both
theoretical and practical.

\end{document}